\newtheorem{theorem}{Theorem}
\newtheorem{lemma}{Lemma}
\newtheorem{proposition}{Proposition}
\theoremstyle{definition}
\newtheorem{definition}{Definition}
\DeclareMathOperator{\Aut}{Aut}
\DeclareMathOperator{\End}{End}
\DeclareMathOperator{\Der}{Der}
\DeclareMathOperator{\Lie}{Lie}
\DeclareMathOperator{\Inder}{Inder}
\DeclareMathOperator{\MM}{M}
\DeclareMathOperator{\Spin}{Spin}
\DeclareMathOperator{\SL}{SL}
\DeclareMathOperator{\SO}{SO}
\DeclareMathOperator{\Tr}{Tr}
\DeclareMathOperator{\ad}{ad}
\DeclareMathOperator{\pt}{pt}
\DeclareMathOperator{\vect}{vect}
\DeclareMathOperator{\triv}{triv}
\DeclareMathOperator{\TKK}{TKK}
\DeclareMathOperator{\Sl}{\mathfrak{sl}}
\DeclareMathOperator{\Sp}{\mathfrak{sp}}
\DeclareMathOperator{\So}{\mathfrak{so}}
\DeclareMathOperator{\g}{\mathfrak{g}}
\DeclareMathOperator{\ddim}{dim}
\DeclareMathOperator{\m}{\mathfrak{m}}
\newenvironment{enumerate*}%
  {\begin{enumerate}%
    \setlength{\itemsep}{1pt}%
    \setlength{\parskip}{1pt}}%
  {\end{enumerate}}
\title{Geometry of symmetric spaces of type EVI}
\author{Victor A. Petrov\footnote{St. Petersburg State University, St. Petersburg, Russia}\ \footnote{\texttt{victorapetrov@googlemail.com}}, Andrei V. Semenov\footnote{St. Petersburg State University, St. Petersburg, Russia}\ \footnote{\texttt{asemenov.spb.56@gmail.com}}}
\begin{document}

\maketitle

\begin{abstract}
We generalize Atsuyama's result on the geometry of symmetric spaces of type EVI to the case of arbitrary field of characteristic zero. We relate the possible mutual positions of two points with the classification of balanced symplectic ternary algebras (also known as Freudenthal triple systems).
    
\emph{Keywords:} symmetric spaces, Freudenthal triple systems, minuscule embeddings
\end{abstract}

\section{Introduction}

One of the powerful tool in studying an algebraic group is to associate to it a certain geometric structure together with an action of the group. For an isotropic group Jacques Tits constructed a simplicial complex called a \emph{spherical building} whose simplices are the parabolic subgroups defined over the base field.

For an anisotropic group this tool is no longer available, but one can use other types of closed subgroups, for example, fixed points of involutions. The resulting variety is called a \emph{symmetric space}, a very classical object of differential geometry that can be considered over an arbitrary field of characteristic not $2$. Boris Rosenfeld \cite{Ros} noted that symmetric spaces of exceptional type can be considered as an ``elliptic plane'' over some nonassociative algebra (actually, a structurable algebra in terms of Allison). In the case of real numbers Atsuyama and Vinberg independently computed a number of lines passing through two points in general position \cite{A,V}; Atsuyama also described the varieties of lines passing through two points in a special position (they also form a smaller symmetric space). We generalize this last result in the case of symmetric spaces of type EVI (that is a twisted form of $E_7/D_6+A_1$) to the case of an arbitrary field of characteristic $0$. Surprisingly, the key point is the proof is related to another invention of Jacques Tits, namely, the Tits--Kantor--K\"ocher construction.

We hope that our result may be helpful in settling the $E_7$-case of the Serre Conjecture II about algebraic groups over a field of cohomological dimension $2$ \cite{Gi}. Namely, a key question there is identification of minuscule $A_1$-subgroups in a group of type $E_7$, and such subgroups form a symmetric space of type EVI.

\section{Generalities}

\subsection{Balanced symplectic ternary algebras}

A ternary algebra $A$ equipped with a symplectic form $\langle\cdot,\cdot\rangle$ is called a \emph{balanced symplectic ternary algebra} if the product satisfies the following axioms:
\begin{enumerate}
    \item $xyz=yxz+\langle x,y\rangle z$;
    \item $xyz=xzy+\langle y,z\rangle x$;
    \item $(xyz)vw=(xvw)yz+x(yvw)z+xy(zwv).$
\end{enumerate}

For example, one can set
\begin{equation}\label{eq:triv}
xyz=\frac{1}{2}(\langle x,y\rangle z+\langle y,z\rangle x+\langle z,x\rangle y).
\end{equation}

A less trivial example is the algebra of 2 by 2 matrices of the form
$
\begin{pmatrix}
\alpha&a\\
b&\beta
\end{pmatrix}$, where $\alpha, \beta\in F$, $a,b\in J$,
for a quadratic Jordan algebra $J$, with the product defined as in \cite[\S~3,II]{FF}.

Other names for this structure include: $J$-ternary algebra in the case when $J$ is one dimensional Jordan algebra (see \cite{All}) and Freudenthal triple system, which usually considered with the symmetrized version of the product (see \cite{Fe}).


One can naturally define a notion of an ideal, and a ternary algebra is called \emph{simple} if it is nonzero and contains no nontrivial ideals.


The following proposition provides a classification of simple balanced symplectic ternary algebras.

\begin{proposition}\label{prop:class}
If $F$ is algebraically closed then any simple balanced symplectic ternary algebra with $\langle\cdot,\cdot\rangle\not\equiv 0$ is contained in the following list:
\begin{enumerate}
\item The algebra constructed from a nondegenerate symplectic form with the product as in \eqref{eq:triv};
\item The algebra constructed by a quadratic Jordan algebra associated with a nondegenerate quadratic form;
\item The algebra constructed by a quadratic Jordan algebra associated with an admissible cubic form.
\end{enumerate}
\end{proposition}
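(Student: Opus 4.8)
The plan is to route the problem through the Tits--Kantor--K\"ocher (TKK) construction, turning it into the classification of simple Lie algebras over $F$ carrying a $5$-grading whose outermost components are one-dimensional.

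First one normalizes the form. The radical $\{x\in A:\langle x,A\rangle=0\}$ is an ideal --- the balanced axioms tie the form to the product so tightly that any product involving a radical vector stays in the radical --- so simplicity of $A$ together with $\langle\cdot,\cdot\rangle\not\equiv 0$ forces $\langle\cdot,\cdot\rangle$ to be nondegenerate. Then one applies TKK: $A$ produces a $\mathbb Z$-graded Lie algebra
\[
L=L(A)=\mathfrak g_{-2}\oplus\mathfrak g_{-1}\oplus\mathfrak g_0\oplus\mathfrak g_1\oplus\mathfrak g_2,
\]
with $\mathfrak g_{\pm1}$ two copies of $A$, $\dim\mathfrak g_{\pm2}=1$, and $\mathfrak g_0$ the inner structure Lie algebra of $A$ (the subalgebra of $\mathrm{End}(A)$ generated by the operators $L_{x,y}\colon z\mapsto xyz$); by axiom (1) this $\mathfrak g_0$ already contains $\mathrm{id}_A$, which serves as grading element. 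The Jacobi identity for $L$ is exactly the content of axioms (1)--(3), with (1) and (2) accounting for the brackets landing in $\mathfrak g_{\pm2}$ and (3) for those landing in $\mathfrak g_{\pm1}$; this verification is standard. One then checks that $L(A)$ is a finite-dimensional \emph{simple} Lie algebra: every ideal is graded, being stable under $\mathrm{ad}\,\mathrm{id}_A$; nondegeneracy of $\langle\cdot,\cdot\rangle$ yields $[\mathfrak g_1,\mathfrak g_1]=\mathfrak g_2$ and $[\mathfrak g_{-2},\mathfrak g_1]=\mathfrak g_{-1}$, so a graded ideal meeting $\mathfrak g_{\pm1}$ nontrivially is either all of $\mathfrak g_{\pm1}$, hence all of $L$, or induces a proper nonzero ideal of $A$, which is impossible; and an ideal contained in $\mathfrak g_0$ annihilates $\mathfrak g_{\pm1}$, hence vanishes since $\mathfrak g_0$ acts faithfully on $\mathfrak g_1=A$. (Alternatively one cites that TKK of a simple ternary algebra is simple.)

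Thus $L$ is a finite-dimensional simple Lie algebra over the algebraically closed field $F$ of characteristic $0$, with a $5$-grading such that $\dim\mathfrak g_{\pm2}=1$ and $\mathfrak g_{\pm1}\neq 0$, and moreover $\mathfrak g_1$ --- as a module over the semisimple part of $\mathfrak g_0$ --- carries the invariant symplectic form given by the bracket $\mathfrak g_1\times\mathfrak g_1\to\mathfrak g_2\cong F$, from which the triple product and $A$ itself can be reconstructed. Now I would run through the Cartan--Killing classification. Conjugating the grading element into the dominant coweight cone, the grading is encoded by nonnegative integers $c_i$ on the simple roots with $\sum a_ic_i=2$, where $\theta=\sum a_i\alpha_i$ is the highest root; the constraint $\dim\mathfrak g_{\pm2}=1$ singles out the ``contact'' grading of each simple type, and the extra requirement that the grading really issue from a \emph{balanced symplectic} ternary product eliminates type $A_n$ --- where the reconstructed product on $\mathfrak g_1=V\oplus V^{*}$ fails axioms (1)--(3) --- as well as the low-rank coincidences among types $B$, $C$, $D$. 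What remains is a short list: the symplectic algebras, the orthogonal algebras, and $G_2,F_4,E_6,E_7,E_8$.

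It then remains to identify, type by type, the reconstructed pair $\bigl(\mathfrak g_1,\ xyz\bigr)$ with one of the three families: for the symplectic algebras the structure algebra acts on $\mathfrak g_1$ through its standard module and the product is forced to be \eqref{eq:triv}; for the orthogonal algebras one recognizes the $2\times2$-matrix algebra over the Jordan algebra of a nondegenerate quadratic form; and for $G_2,F_4,E_6,E_7,E_8$ --- and for $\mathfrak{so}_8$, which reappears here --- one recognizes the $2\times2$-matrix algebra over the cubic Jordan algebra $H_3(C)$ of a composition algebra, or of its degenerations, i.e. over the Jordan algebra of an admissible cubic form. Since each listed algebra is in turn simple with nonzero form, the list is also exhaustive. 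I expect the principal difficulty to lie exactly in this last step together with the grading analysis that precedes it: determining precisely which contact gradings are compatible with axioms (1)--(3), so as to rule out type $A$ and the spurious coincidences, and then reconstructing the Jordan-algebraic data on $\mathfrak g_1$ in the orthogonal and exceptional cases; establishing the simplicity of $L(A)$ is the other place where care (or a reference) is needed.
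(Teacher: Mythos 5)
The paper offers no argument of its own here: Proposition~\ref{prop:class} is quoted verbatim from \cite{FF}*{Theorem~4.1}. Your strategy --- pass to the TKK algebra $L(A)$, show it is simple with an extra-special $5$-grading, identify such gradings with the contact grading attached to a long root, and read $A$ back off $\mathfrak{g}_1$ --- is the standard route and essentially the one underlying the cited source, and the preliminary reductions (radical of the form is an ideal, simplicity of $L(A)$, conjugacy of contact gradings) are right in outline, though the first of these is asserted rather than derived from the axioms.

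The genuine gap sits exactly where you park the ``principal difficulty,'' and it is larger than your sketch acknowledges. Axioms (1) and (2) are \emph{formal consequences of the Jacobi identity}: for any extra-special $5$-grading, setting $xyz=[[[f,x],y],z]$ and $[x,y]=\langle x,y\rangle e$, Jacobi gives $xyz-yxz=\pm\langle x,y\rangle z$ and $xyz-xzy=\pm\langle y,z\rangle x$ with consistent signs, so they can never exclude anything. Axiom (3), by contrast, is \emph{not} a consequence of Jacobi (Jacobi only yields the derivation identity for the left multiplications $z\mapsto xyz$, which are restrictions of $\ad(L_0)$; the right multiplications $u\mapsto uvw$ are not), and the entire exclusion of type $A_n$ rests on its failure --- your parenthetical ``fails axioms (1)--(3)'' obscures this. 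Concretely, the contact grading of $\mathfrak{sl}_5$ gives $\mathfrak{g}_1=V\oplus V^{*}$ with $V=F^3$ and products $a_ia_jz=b_ib_jz=0$, $a_ib_ja_k=\delta_{jk}a_i$, $a_ib_jb_k=-\delta_{ij}b_k-\delta_{ik}b_j$, $b_ia_ja_k=\delta_{ik}a_j+\delta_{ij}a_k$, $b_ia_jb_k=-\delta_{jk}b_i$; this satisfies (1) and (2) and is simple with nondegenerate form, yet for $x=a_1$, $y=b_1$, $z=b_2$, $v=a_2$, $w=a_3$ the left side of (3) equals $-a_3$ while all three terms on the right vanish (and (3) is invariant under rescaling the product, so no normalization rescues it). Without this computation, or an equivalent structural argument, nothing prevents a fourth, $\mathfrak{sl}_n$-type family from appearing in the list. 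The same criticism applies to the concluding identification of the orthogonal and exceptional contact gradings with the $2\times 2$ Jordan-matrix construction (and of the symplectic one with \eqref{eq:triv}): it is asserted type by type but never performed. The skeleton is correct; the two steps that constitute the actual content of the theorem are deferred rather than proved.
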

\begin{proof}
See \cite[Theorem~4.1]{FF}.
\end{proof}

\subsection{Tits--Kantor--K\"ocher construction}

We say that Lie algebra $L$ is \emph{5-graded} if $L = \bigoplus_{i \in \mathbb{Z}} L_i$ as graded Lie algebra and $L_i = 0$ for any $i \not \in \{-2,-1,0,1,2\}$. We will be interested in the special (sometimes called \emph{extra-special}) case when $\dim L_2=\dim L_{-2}=1$, say, $L_2=Fe$, $L_{-2}=Ff$, and each $L_i$ is the eigenspace of the operator $[[e,f],-]$ with the eigenvalue $i$. In this case $L_2$ and $L_{-2}$ generate the subalgebra isomorphic to $\Sl_2$.

The operation $xyz=[[[f,x],y],z]$ defines on $L_1$ the structure of a balanced symplectic ternary algebra, with the symplectic form defined by $[x,y]=\langle x,y\rangle e$.

Conversely, given a balanced symplectic ternary algebra $A$ the vector space $A\oplus A$ equipped with the operation
\begin{align*}
    [(x_1,y_1),(x_2,y_2),(x_3,y_3)]=(&x_1x_2y_3-x_1x_3y_2+x_3x_2y_1-x_2x_3y_1,\\
    &-(y_1y_2x_3-y_1y_3x_2+y_3y_2x_1-y_2y_3x_1))
\end{align*}
becomes a Lie triple system (see \cite[(1.7)]{FF}). In its own turn, a Lie triple system $M$ defines the Lie algebra $M\oplus\Inder(M)$ (which we call the embedding Lie algebra), where $\Inder(M)$ is the Lie algebra of the \emph{inner} derivations of $M$, that is of the operators $[x,y,-]$ with $x,y\in M$. In our situation the resulting Lie algebra is 5-graded with $L_1$ isomorphic to $A$; the -2-nd and 2-nd components are one-dimensional unless the symplectic form is trivial. The resulting Lie algebra is called the \emph{Tits--Kantor--K\"ocher construction} and is denoted by $\TKK(A)$.

For example, the simple balanced symplectic ternary algebra listed in Proposition~\ref{prop:class} produce the following simple Lie algebras respectively:
\begin{enumerate}
    \item The symplectic Lie algebra $\Sp_{2n}$;
    \item The orthogonal Lie algebra $\So_n$;
    \item The exceptional Lie algebras $\mathfrak{g}_2$, $\mathfrak{f}_4$, $\mathfrak{e}_6$, $\mathfrak{e}_7$, $\mathfrak{e}_8$.
\end{enumerate}

\begin{proposition}\label{prop:simple}
If $\TKK(A)$ is reductive then $A$ is simple.
\end{proposition}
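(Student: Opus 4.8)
The plan is to establish the contrapositive: if $A$ is not simple, then $\TKK(A)$ is not reductive. If $A=0$ then $\TKK(A)=0$, so we may assume that $A$ carries a proper nonzero ideal $I$; write $L=\TKK(A)=M\oplus\Inder(M)$ with $M=A\oplus A$. First I would lift $I$ to a graded ideal of $L$. Put $\mathfrak{i}=I\oplus I\subseteq M$. Reading off the displayed formula for the triple product on $A\oplus A$, one checks that $\mathfrak{i}$ is an ideal of the Lie triple system $M$: every summand of $[(x_1,y_1),(x_2,y_2),(x_3,y_3)]$ is a ternary product having one of its arguments in $I$ as soon as one of the three pairs lies in $\mathfrak{i}$. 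Now set $\mathfrak{I}=\mathfrak{i}+[M,\mathfrak{i}]$. Since $[M,M]=\Inder(M)$ we have $[M,\mathfrak{i}]\subseteq\Inder(M)$, which is concentrated in degrees $-2,0,2$; a routine computation — using that $\ad D$ acts as a derivation on $\Inder(M)$ and that $\mathfrak{i}$ is a triple-system ideal — shows that $\mathfrak{I}$ is closed under the bracket and satisfies $[L,\mathfrak{I}]\subseteq\mathfrak{I}$. Hence $\mathfrak{I}$ is a graded ideal of $L$ with $\mathfrak{I}\cap L_1=I$, and since $0\neq I\subsetneq L_1$ it is proper and nonzero.

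Suppose now, towards a contradiction, that $L$ is reductive. I would first observe that $L$ is then semisimple: the centre $Z(L)$ is annihilated by $\ad[e,f]$, hence lies in $L_0$, while by the very definition of the construction $L_0=[L_1,L_{-1}]\subseteq[L,L]$; since $Z(L)\cap[L,L]=0$ in any reductive Lie algebra, $Z(L)=0$. Thus $L$ is a direct sum of simple ideals, $\mathfrak{I}$ is the sum of some of them, and denoting by $\mathfrak{I}^{c}$ the sum of the remaining ones we obtain $L=\mathfrak{I}\oplus\mathfrak{I}^{c}$ with $[\mathfrak{I},\mathfrak{I}^{c}]=0$, both of them graded. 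Intersecting with $L_1$ gives $L_1=I\oplus J$, where $J:=\mathfrak{I}^{c}\cap L_1$ is nonzero because $I$ is proper. Since the symplectic form is nondegenerate it is in particular nontrivial, so $\dim L_{\pm2}=1$, and therefore $e$ belongs to exactly one of $\mathfrak{I}$, $\mathfrak{I}^{c}$. If $e\in\mathfrak{I}$ (the other case being symmetric, with the roles of $I$ and $J$ exchanged), then $\mathfrak{I}^{c}\cap L_2=0$, so $\langle z,z'\rangle e=[z,z']\in\mathfrak{I}^{c}\cap L_2=0$ for all $z,z'\in J$, i.e. $\langle J,J\rangle=0$; and $\langle v,z\rangle e=[v,z]=0$ for $v\in I$ and $z\in J$, since $I\subseteq\mathfrak{I}$ and $J\subseteq\mathfrak{I}^{c}$ commute. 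As $A=I\oplus J$, this forces $J\subseteq A^{\perp}$ with $J\neq0$, contradicting nondegeneracy of $\langle\cdot,\cdot\rangle$. Hence $L$ is not reductive.

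The main obstacle is the construction in the first paragraph: verifying that $\mathfrak{I}$ is genuinely an ideal of $L$ and that its degree-$1$ component equals $I$. This is precisely the assertion that ideals of the balanced symplectic ternary algebra $A$ correspond to graded ideals of $\TKK(A)$ meeting $L_1$ in the prescribed subspace; it is elementary but calls for a careful bookkeeping with the Lie triple identities and the explicit product on $A\oplus A$. Granting this correspondence, the rest is soft: reductivity forces semisimplicity because $L_0=[L_1,L_{-1}]$, a complementary ideal then comes for free, and the relation $[x,y]=\langle x,y\rangle e$ on $L_1$ converts the resulting splitting into a contradiction with nondegeneracy of the form.
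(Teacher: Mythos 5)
Your strategy is genuinely different from the paper's: the paper shows that $\TKK(A)$ is perfect (hence semisimple once reductive) and then quotes Faulkner--Ferrar for the passage from semisimplicity of $\TKK(A)$ to simplicity of $A$, whereas you make that passage explicit by lifting an ideal $I$ of $A$ to the graded ideal $\mathfrak{I}=\mathfrak{i}+[M,\mathfrak{i}]$, $\mathfrak{i}=I\oplus I$, of the standard embedding. That part is correct: $I\oplus I$ is indeed an ideal of the Lie triple system $A\oplus A$ for the reason you give, and $\mathfrak{i}+[M,\mathfrak{i}]$ is an ideal of $M\oplus\Inder(M)$ by the usual computation with the derivation property of the inner derivations. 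The genuine gap is the sentence ``Since the symplectic form is nondegenerate'': nondegeneracy is not a hypothesis of the proposition and you never derive it. This is not cosmetic. The first axiom gives $\langle x,y\rangle z=xyz-yxz\in I$ whenever $x\in I$, so \emph{every} proper ideal of $A$ satisfies $\langle I,A\rangle=0$, i.e.\ lies in the radical of the form. Hence, if the form really were nondegenerate, $A$ would have no proper nonzero ideals for free, with no use of reductivity at all; the entire content of the proposition lives in the case of a degenerate form, which is exactly where your final contradiction ($J\subseteq A^{\perp}$ with $J\neq0$) ceases to be a contradiction.

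Fortunately the gap closes inside your own framework. Your reduction from reductive to semisimple is fine provided $e$ and $f$ exist, i.e.\ the form is not identically zero --- a tacit assumption shared by the paper's own proof, and genuinely needed (for a two-dimensional $A$ with zero product and zero form, $\TKK(A)$ is abelian, hence reductive, while $A$ is not simple). Once $L$ is semisimple, its Killing form $\kappa$ is nondegenerate and pairs $L_{i}$ with $L_{-i}$; from $[x,y]=\langle x,y\rangle e$ and invariance one gets $\langle x,y\rangle\,\kappa(e,f)=-\kappa(x,[f,y])$ for $x,y\in L_{1}$, and since $\kappa(e,f)\neq0$ and $\ad f\colon L_{1}\to L_{-1}$ is injective (if $[f,y]=0$ then $y=[[e,f],y]=[e,[f,y]]-[f,[e,y]]=0$ because $[e,y]\in L_{3}=0$), the form $\langle\cdot,\cdot\rangle$ is nondegenerate. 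Inserting this derivation where you currently assert nondegeneracy completes the proof; note, though, that combined with the observation that proper ideals lie in $A^{\perp}$ it also shows the whole construction of $\mathfrak{I}$ can be bypassed.
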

\begin{proof}
Indeed, $\TKK(A)$ is perfect, since it contains an element $[e,f]$ acting as a nonzero scalar on each $L_i$, $i\ne 0$, while $L_0$ consists of inner derivations, that are commutators. Now $A$ is semisimple by \cite[Corollary~2]{FF} and simple by \cite[Lemma~3.1]{FF}.
\end{proof}

\subsection{Quaternionic gifts}\label{sec:Quat}

We will need also a twisted version of the Tits--Kantor--K\"ocher construction described in \cite{Pe}. Consider a 5-graded Lie algebra $L$ as above; then the action of $e$ and $f$ defines an isomorphism between $L_1$ and $L_{-1}$, and the action of $\Sl_2$ on $L_1\oplus L_{-1}$ agrees with the action of $\MM_2(F)$ on $L_1\oplus L_{-1}\simeq F^2\otimes L_1$ on the first component.

Assume that the symplectic form $\langle \cdot,\cdot\rangle$ is nondegenerate. Then the product defines the map
\begin{align*}
    \pi\colon\End(L_1)&\to\End(L_1)\\
    \langle \cdot,u\rangle v&\mapsto (w\mapsto uvw).
\end{align*}

The Morita equivalence produces the corresponding map
$$
\pi\colon\End_{\MM_2(F)}(F^2\otimes L_1)\to \End_{\MM_2(F)}(F^2\otimes L_1).
$$

Similarly, the symplectic form $\langle\cdot,\cdot\rangle$ on $L_1$ produces the Hermitian form
$$
\phi\colon F^2\otimes L_1\times F^2\otimes L_1\to\MM_2(F).
$$

The Lie triple system structure on $L_1\oplus L_{-1}\simeq F^2\otimes L_1$ can be reconstructed by the formula
$$
D(u,v)=\frac{1}{2}\big(\pi(\phi(\cdot,u)v-\phi(\cdot,v)u)+\phi(v,u)-\phi(u,v)\big).
$$

This description admits a Galois descent: one can take a quaternion algebra $Q$ over $F$ and consider a left $Q$-module $W$ together with a map
$$
\pi\colon\End_Q(W)\to\End_Q(W)
$$
and a Hermitian for
$\phi\colon W\times W\to Q$
that become isomorphic to the maps considered above after passing to a splitting field of $Q$. The same formula defines then a structure of a Lie triple system $W$, and we define $\TKK(W)$ to be the embedding Lie algebra. Note that $\End_Q(W)$ is a \emph{gift} (generalized Freudenthal triple system) in terms of \cite{Gar1}. In this paper, however, we will call $W$ together with $\pi$ and $\phi$ a \emph{quaternionic gift}.

\subsection{Dynkin indices}
Let $\g$ be simple Lie algebra over the base field $F$ of characteristic neither 2 nor 3, and let rank$\g = n$. Fix the set $\Phi$ of all simple roots and denote by $\Phi^+$ the set of all positive roots of $\g$. Recall that the Killing form $\kappa(x,y)$ is defined by the rule
$$\kappa (x,y) =\Tr(\ad(x) \circ \ad(y)).$$

Fix a long coroot $\theta$ and define $(-,-)$ to be an invariant scalar product on $\Phi$ normalized such that $(\theta, \theta) = 2$. Given any invariant bilinear symmetric form $B$ on $\g$ one can find the normalized Killing form $(x,y) = \frac{2B(x,y)}{B(\theta, \theta)}$.

Let $f : \g \longrightarrow \mathfrak{gl}(V)$ be a representation $V$ of $\g$. We define $(x,y)_f = \Tr(f(x)f(y))$ for all $x, y \in \g$, where $\Tr$ is just the matrix trace. This is an invariant symmetric bilinear form defined on $\g$.

Since an invariant symmetric bilinear form on $\g$ is unique up to a non-zero constant, for any $f$ of $\g$ there exist a unique $\lambda_f \in F$ such that $(x,y)_f = \lambda_f (x,y),$ where $(-,-)$ is the normalized Killing form on $\g$. The constant $\lambda_f$ is called the {\it Dynkin index} of the representation $f$ of $\g$.


In order to compute Dynkin index for an irreducible representation $f : \g \longrightarrow \mathfrak{gl}(V)$ with the highest weight $\Lambda$ one can use the following formula:
$$\lambda_f = \frac{(\Lambda, \Lambda + 2\rho) {{\ddim V}}}{\ddim \g},$$
where $\rho = {\frac{1}{2}} \sum_{\alpha \in \Phi^+} \alpha$, and the number $(\Lambda, \Lambda + 2\rho)$ is the eigenvalue of the second-order Casimir operator.

Let $\g $ and $\m $ be simple Lie algebras and let $f : \g \longrightarrow \m$ be an embedding. In this case there exists $j_f \in F$ such that $(f(x), f(y)) = j_f \cdot (x,y)$. 
We say that such $j_f$ is a {\it Dynkin index} of an embedding $f$.

A straightforward computation shows that for any representation $\psi : \m \longrightarrow \mathfrak{gl}(V)$ we have 
$$j_f = \frac{\lambda_{\psi \circ f}}{\lambda_{\psi}}.$$

Finally, if $\g$ and $\m$ are semisimple, let us consider their decomposition into a simple components:
$$\g = \bigoplus_i \g_i \text{ and } \m = \bigoplus_j \m_j.$$
So $f = \oplus f_{ij}$, where each $f_{ij}$ is a map from (simple) Lie algebra $\g_i$ to (simple) Lie algebra $\m$. Hence one can define $\lambda_{ij}$ as a Dynkin index of $f_{ij}$ and obtain the matrix $\Lambda_f = (\lambda_{ij})_{i,j}$.
\begin{definition}
We define the \emph{Dynkin multi-index} of the map $f: \g \to \m$ to be the matrix $\Lambda_f$ parametrized by the simple components of $\g$ and $\m$.
\end{definition}
It is not hard to see that the Dynkin multi-index of the composition is the matrix product of the Dynkin multi-indices of the composing maps.

\section{Description of EVI}

For a semisimple linear group $G$ denote by $\mathcal{S}(G)$ the variety of all subgroups of type $A_1$ such that the embedding of the corresponding Lie algebras is of Dynkin multi-index $(0,\ldots,1,\ldots,0)$ (one entry is $1$ and the rest are $0$'s).

\begin{proposition}\label{prop:minusc}
\begin{enumerate}
    \item $\mathcal{S}(G_1\cdot G_2)=\mathcal{S}(G_1)\coprod\mathcal{S}(G_2)$.
    \item If $G$ is simple, then $\mathcal{S}(G)$ is homogeneous in algebraic sense, that is all such $A_1$-subgroups are conjugate after passing to an algebraic closure.
\end{enumerate}
\end{proposition}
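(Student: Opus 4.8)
The first part is essentially formal. The Dynkin multi-index records, for each simple component $\g_i$ of the source and each simple component $\m_j$ of the target, the Dynkin index of the corresponding projection. If $\g = \Sl_2$ is embedded in $\Lie(G_1 \cdot G_2) = \Lie(G_1) \oplus \Lie(G_2)$, then the embedding is determined by its two projections to $\Lie(G_1)$ and $\Lie(G_2)$; the multi-index is the concatenation of the two multi-indices. The condition that the total multi-index is $(0,\dots,1,\dots,0)$ forces exactly one of the two projections to be nonzero (an $A_1$-subgroup cannot map to $0$ in both factors), and on that side it must be a genuine embedding with multi-index $(0,\dots,1,\dots,0)$, while on the other side the projection is central, hence trivial since $\Sl_2$ is perfect. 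So the $A_1$-subgroup actually lies in $G_1$ or in $G_2$ and is minuscule there; conversely any such subgroup gives a point of $\mathcal{S}(G_1\cdot G_2)$. This gives the disjoint-union decomposition, and I would spell out just this much.

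For the second part the strategy is: reduce to the split case by base change (being "homogeneous in the algebraic sense" is a statement about $G_{\bar F}$, so we may assume $F = \bar F$ and $G$ split simple), then invoke the classification of $\Sl_2$-subalgebras of a simple Lie algebra. Over an algebraically closed field of characteristic $0$, conjugacy classes of $\Sl_2$-triples are controlled by the weighted Dynkin diagram / the partition-type data, and the Dynkin index of the embedding is a conjugacy invariant. The key point is the converse: among all $\Sl_2$-subalgebras, the ones of Dynkin index $1$ form a single conjugacy class. This is exactly the statement that there is a unique (up to conjugacy) "minuscule" or "principal-in-a-Levi-of-type-$A_1$"... more precisely, the index-$1$ $\Sl_2$'s are precisely those whose action on the minuscule (or the smallest nontrivial) representation decomposes with all $\Sl_2$-weights in $\{-1,0,1\}$, equivalently the root subgroups attached to a long root; and all long roots are conjugate under the Weyl group. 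I would argue that Dynkin index $1$ forces the $\Sl_2$ to be generated by root vectors $e_\alpha, f_\alpha$ for a long root $\alpha$: indeed if $h$ is the semisimple element of the triple, its eigenvalues on $\g$ are even integers, and $\sum (\text{eigenvalue})^2$ over a root basis equals $2 \cdot (\text{index}) \cdot h^\vee$ up to the normalization, so index $1$ pins down $h$ to be a long coroot; then $e, f$ are forced into the corresponding root spaces, and long coroots form one Weyl orbit.

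The main obstacle is making the "index $1$ $\Rightarrow$ long-root $\Sl_2$" step clean and characteristic-free enough (the paper restricts to characteristic $0$ here, which helps), and dealing uniformly with all simple types — for the multiply-laced types one must check that a \emph{short}-root $\Sl_2$ has index $>1$ (it has index equal to the ratio of squared root lengths), so that index exactly $1$ genuinely singles out the long-root class. I would handle this by the weight-sum computation above: for an $\Sl_2$-triple $(e,h,f)$, if $\g = \bigoplus_k V_k$ as an $\Sl_2$-module with $V_k$ the isotypic piece of the $(k+1)$-dimensional irreducible, then $\lambda_f = \sum_k (\dim V_k)\,\tfrac{k(k+1)(k+2)}{6} \big/ (\text{normalization})$; index $1$ is the minimal possible positive value and is attained only when the $\Sl_2$ acts with highest weight $1$ on a single copy sitting inside the adjoint representation in the way a long root does. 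Once $h$ is a long coroot $h_\alpha$, the centralizer arithmetic forces $e \in \g_\alpha$, $f \in \g_{-\alpha}$ up to scaling, and Weyl-group transitivity on long roots finishes it. I would cite a standard reference (e.g. the tables of Dynkin indices of $\Sl_2$-subalgebras) rather than redo the case analysis, and present the argument as: base change to split, reduce to $\Sl_2$-triples, compute that index $1$ forces a long-root triple, conclude by Weyl conjugacy.
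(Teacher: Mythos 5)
Your proposal is correct in substance, but it is worth knowing that the paper disposes of this proposition in one line: part (1) is declared ``clear'' (your unpacking of it --- a zero multi-index on one side forces the projection of the perfect algebra $\Sl_2$ to that side to vanish, so the subgroup lands in a single factor --- is exactly the content being waved at), and part (2) is delegated entirely to the citation \cite[Theorem~3.1]{GG}. What you propose for part (2) is essentially a self-contained proof of that cited theorem via the classical theory of $\Sl_2$-triples: Jacobson--Morozov, the identification of the Dynkin index with $\tfrac12(h,h)$ for the characteristic $h$, the fact that index $1$ is attained only by the minimal nilpotent orbit (equivalently $h$ conjugate to the coroot of the highest root, whose $2$-eigenspace is the one-dimensional root space $\g_\theta$, pinning down $e$ and $f$), and Weyl-group transitivity on long roots together with Kostant/Mal'cev conjugacy of triples with conjugate nilpositive parts. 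That route is valid in characteristic $0$ and buys independence from the reference, at the cost of needing the ``index $1$ $\Rightarrow$ minimal orbit'' step, which you rightly flag as the part you would either compute or look up in Dynkin's tables.

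One concrete slip: you assert that the eigenvalues of $\ad h$ on $\g$ are \emph{even} integers. They are only integers; the minimal nilpotent orbit is not an even orbit in general (for $h=\theta^\vee$ the grading is the $5$-grading $\g_{-2}\oplus\g_{-1}\oplus\g_0\oplus\g_1\oplus\g_2$ with $\g_{\pm1}\neq 0$ in almost all types --- indeed this is the very grading the paper uses for the TKK construction). This does not damage your argument, since integrality of the eigenvalues is all you use downstream, but the sentence as written is false and should be corrected. Also, in your weight-sum formula the coefficient $\tfrac{k(k+1)(k+2)}{6}$ should multiply the \emph{multiplicity} of the $(k+1)$-dimensional irreducible, not the dimension of the isotypic component.
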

\begin{proof}
The first claim is clear, and the second follows from \cite[Theorem~3.1]{GG}.
\end{proof}

A split group of type $E_7$ contains a maximal subgroup defined by a subsystem of type $D_6+A_1$, as Borel--de Siebenthal process shows:
$$
\xymatrix{
{\bullet^0}\ar@{--}[r]&{\bullet^1}\ar@{-}[r]&{\bullet^3}\ar@{-}[r]&{\bullet^4}\ar@{-}[d]\ar@{-}[r]&{\bullet^5}\ar@{-}[r]&{\bullet^6}\ar@{-}[r]&{\bullet^7}\\
&&&{\bullet^2}
}
$$

If $G$ is any group of type $E_7$ with ${\mathcal S}(G)(F)\ne\emptyset$ (this holds in particular when $F$ has no nontrivial odd degree extensions, see \cite{Pe}), then the centralizer of a subgroup of type $A_1$ with Dynkin index $1$ is a subgroup of type $D_6$, and so ${\mathcal S}(G)$ can be identified with a symmetric space of type $EVI=E_7/D_6+A_1$.

Following \cite{V} we introduce a structure of a ``projective'' (or, better, elliptic) plane on ${\mathcal S}(G)(F)$. Both \emph{points} and \emph{lines} are the elements of ${\mathcal S}(G)(F)$, and a point is \emph{incident} to a line if and only if the respective subgroups of type $A_1$ commute.

\section{Main result}

\begin{lemma}[Pasha Zusmanovich]\label{lem:pasha}
A Lie subalgebra of an anisotropic reductive Lie algebra is also anisotropic reductive.
\end{lemma}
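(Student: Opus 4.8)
The plan is to reduce the statement to a single Lie-algebraic fact that captures anisotropy in characteristic zero. With the usual convention that an \emph{anisotropic reductive} Lie algebra is a reductive Lie algebra whose semisimple part contains no nonzero nilpotent element, I claim that a finite-dimensional reductive Lie algebra $L$ over $F$ is anisotropic if and only if every element of $L$ acts semisimply in the adjoint representation. First I would prove this. Writing $L=Z(L)\oplus[L,L]$ and $x=z+s$ with $z\in Z(L)$, $s\in[L,L]$, we have $\ad_L x=\ad_L s$; the nilpotent part of $s$ in its Jordan decomposition inside the semisimple Lie algebra $[L,L]$ is a nilpotent element there, hence zero, so $\ad_L x=\ad_L s$ is semisimple. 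Conversely, if every element of $L$ is $\ad$-semisimple, a nonzero nilpotent element of $[L,L]$ would be $\ad_L$-nilpotent as well as $\ad_L$-semisimple, hence zero.

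Now let $\mathfrak h\subseteq L$ be a subalgebra, $L$ anisotropic reductive. For $y\in\mathfrak h$ the operator $\ad_L y$ is semisimple and stabilises $\mathfrak h$, so its restriction $\ad_{\mathfrak h}y$ is semisimple as well (the minimal polynomial of a restriction divides that of the ambient operator, which is squarefree). Hence every element of $\mathfrak h$ is $\ad_{\mathfrak h}$-semisimple, and by the previous paragraph it remains to show that a finite-dimensional Lie algebra $\mathfrak h$ over $F$ all of whose elements are $\ad$-semisimple is anisotropic reductive. Let $\mathfrak r=\mathrm{rad}(\mathfrak h)$. Passing to $\overline F$ and applying Lie's theorem, the operators $\ad_{\mathfrak h}u$ with $u\in\mathfrak r$ are simultaneously triangularisable, so $\ad_{\mathfrak h}([u,u'])$ is nilpotent for $u,u'\in\mathfrak r$; being also $\ad$-semisimple it vanishes, i.e. $[\mathfrak r,\mathfrak r]\subseteq Z(\mathfrak h)$. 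Since $\mathfrak r$ is an ideal, $\ad_{\mathfrak h}u$ maps $\mathfrak h$ into $\mathfrak r$ for $u\in\mathfrak r$, whence $(\ad_{\mathfrak h}u)^{3}$ maps $\mathfrak h$ into $[\mathfrak r,[\mathfrak r,\mathfrak r]]\subseteq[\mathfrak r,Z(\mathfrak h)]=0$; so $\ad_{\mathfrak h}u$ is nilpotent, hence zero, and $u\in Z(\mathfrak h)$. Thus $\mathrm{rad}(\mathfrak h)=Z(\mathfrak h)$, so $\mathfrak h$ is reductive, and a nonzero nilpotent element of $[\mathfrak h,\mathfrak h]$ would be $\ad_{\mathfrak h}$-nilpotent and therefore zero, so $\mathfrak h$ is anisotropic.

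The step I expect to be the main obstacle is conceptual rather than computational: pinning down the definition of ``anisotropic reductive'' so that the equivalence of the first paragraph holds, and then being careful that the argument lives over $F$. The temptation to pass to $\overline F$ throughout is dangerous, since anisotropy --- and with it the $\ad$-semisimplicity of every element --- is not stable under base change (an anisotropic form of $\mathfrak{sl}_2$ becomes split over $\overline F$); Lie's theorem may be invoked over $\overline F$, but every conclusion about semisimplicity must be drawn over $F$. Alternatively one can avoid Lie's theorem altogether by quoting the standard description, valid in characteristic zero, of the nilradical of $\mathfrak h$ as the set of $\ad_{\mathfrak h}$-nilpotent elements of $\mathrm{rad}(\mathfrak h)$, which at once forces $\mathrm{rad}(\mathfrak h)$ to be central here.
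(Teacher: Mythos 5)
Your proof is correct and follows essentially the same route as the paper, which reduces the lemma to the characterization ``anisotropic reductive $\Leftrightarrow$ every element is $\ad$-semisimple'' together with the fact that semisimplicity of an operator passes to invariant subspaces. The only difference is that where the paper cites the Levi--Maltsev decomposition and Farnsteiner's result on $\ad$-semisimple Lie algebras to conclude that such an algebra is reductive with central radical, you prove this inline via Lie's theorem over $\overline F$ (correctly noting that only nilpotency, not semisimplicity, is transported back down to $F$).
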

\begin{proof}
Follows from the fact that a reductive Lie algebra $L$ is anisotropic if and only if $\ad_x$ is a semisimple operator for all $x\in L$, the Levi--Maltsev decomposition, and \cite[Proposition~1.2]{Fa}.

Cf. the proof of \cite[Lemma~2.3 (4)]{GaGi} in the case of algebraic groups.
\end{proof}

\begin{lemma}\label{lem:D6}
Consider a minuscule subgroup of type $A_1$ in $G$ and denote its centralizer of type $D_6$ by $H$. Then $H=\Aut(W)$ for some ``quaternionic gift'' $W$ (as described in Section~\ref{sec:Quat}).
\end{lemma}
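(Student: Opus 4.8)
The plan is to realize the centralizer $H$ of a minuscule $A_1$-subgroup as the automorphism group of a Lie-triple-system structure carrying a quaternionic-gift datum, by reversing the TKK-type constructions of Section~\ref{sec:Quat}. First I would pass to an algebraic closure $\bar F$: by Proposition~\ref{prop:minusc}(2) the $A_1$-subgroup becomes conjugate to the one cut out by the Borel--de Siebenthal node labelled $2$ in the diagram of $E_7$, so over $\bar F$ the Lie algebra $\mathfrak g=\Lie(G)$ acquires a $5$-grading $\mathfrak g=\bigoplus_{i=-2}^2\mathfrak g_i$ with $\mathfrak g_{\pm2}$ one-dimensional, $\mathfrak g_{\pm1}$ the half-spin representations of $\mathfrak{so}_{12}$, and $\mathfrak g_0=\mathfrak{so}_{12}\oplus F[e,f]$; here $e,f$ span the root spaces $\mathfrak g_{\pm2}$ and span the minuscule $\Sl_2$. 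This is exactly the extra-special $5$-graded situation of the TKK construction, so $\mathfrak g\cong\TKK(A)$ for the balanced symplectic ternary algebra $A=\mathfrak g_1$, which by Proposition~\ref{prop:class} is the one attached to the $27$-dimensional cubic Jordan algebra, and $\mathfrak g_1\oplus\mathfrak g_{-1}\cong F^2\otimes\mathfrak g_1$ as a module over the $\Sl_2=\MM_2(F)$. Over $\bar F$ the centralizer of the $\Sl_2$ is $\Aut$ of this whole package: it fixes $e,f$ (hence acts on each $\mathfrak g_i$) and preserves all the structure maps $\pi$ and $\phi$ of Section~\ref{sec:Quat}, which are built from the bracket; conversely any automorphism of the Lie triple system $\mathfrak g_1\oplus\mathfrak g_{-1}$ commuting with the $\MM_2(F)$-action extends to an automorphism of the embedding Lie algebra fixing the $\Sl_2$. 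Thus over $\bar F$ we get $H_{\bar F}=\Aut(W_{\bar F})$ with $W_{\bar F}=F^2\otimes\mathfrak g_1$ the split quaternionic gift.

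Next I would perform the Galois descent. The subgroup of type $A_1$ is defined over $F$ by hypothesis (${\mathcal S}(G)(F)\neq\emptyset$), and it is a \emph{minuscule} $A_1$, so by \cite{Pe} its centralizer in $G$ is an $F$-form of $\Spin_{12}$ times (a central quotient involving) the $A_1$; more precisely the commuting pair $(\Sl_2,H)$ is defined over $F$. Therefore the $5$-grading of $\mathfrak g$ is stable under $\Gal(\bar F/F)$ only up to the action of $\Sl_2$, and the associated cocycle is precisely a quaternion algebra $Q$: the $F$-algebra $\End_{\Sl_2}$-structure on $\mathfrak g_1\oplus\mathfrak g_{-1}$ descends to a quaternion algebra $Q$ over $F$ (the one produced in \cite{Pe}), and $\mathfrak g_1\oplus\mathfrak g_{-1}$ descends to a left $Q$-module $W$. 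The maps $\pi\colon\End_Q(W)\to\End_Q(W)$ and the Hermitian form $\phi\colon W\times W\to Q$ are Galois-equivariant because they are defined intrinsically from the Lie bracket of $\mathfrak g$, which is defined over $F$; over $\bar F$ they recover the split maps of Section~\ref{sec:Quat}. Hence $(W,\pi,\phi)$ is a quaternionic gift over $F$ in the sense of that section, and $\TKK(W)=\mathfrak g$ as $F$-Lie algebras.

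Finally I would identify $H$ with $\Aut(W)$ over $F$. Both $H$ and $\Aut(W)$ are the centralizer of the (common, $F$-defined) minuscule $\Sl_2$ inside $\Aut(\mathfrak g)$: an automorphism of $\mathfrak g$ centralizing the $\Sl_2$ fixes $e$ and $f$, hence preserves the grading and the $Q$-module structure, hence restricts to an automorphism of $W$ preserving $\pi$ and $\phi$; conversely an automorphism of the quaternionic gift $W$ extends uniquely (by the reconstruction formula for $D(u,v)$ and the universal property of the embedding Lie algebra) to an automorphism of $\TKK(W)=\mathfrak g$ that fixes $e,f$ and thus centralizes the $\Sl_2$. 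Since these identifications are compatible with base change to $\bar F$, where both sides are the connected group $\Aut(W_{\bar F})$ of type $D_6$, we conclude $H=\Aut(W)$ as algebraic groups over $F$. The main obstacle I expect is the descent bookkeeping in the second paragraph: one must check that the Galois twist of the split $5$-graded picture is governed by a genuine quaternion algebra rather than a more general Azumaya-of-degree-$2$-flavoured gadget, and that the structure maps $\pi,\phi$ descend to the nose (not merely up to inner twist); this is where the precise statements of \cite{Pe} about minuscule $A_1$'s in $E_7$ and their centralizers, together with the intrinsic Lie-bracket definitions of $\pi$ and $\phi$, do the real work.
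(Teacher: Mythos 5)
The paper's own ``proof'' of this lemma is just a citation of \cite[\S~3]{Pe}, and your proposal is essentially a reconstruction of the argument from that reference via the machinery of Section~\ref{sec:Quat}: pass to $\bar F$, exhibit the extra-special $5$-grading whose $\Sl_2$ is the minuscule $A_1$, package $\mathfrak g_1\oplus\mathfrak g_{-1}$ as $F^2\otimes\mathfrak g_1$ with the maps $\pi,\phi$, and descend, with the failure of the grading to be $F$-rational measured by a $\mathrm{PGL}_2$-cocycle, i.e.\ a quaternion algebra $Q$. So the approach is the intended one and the logical skeleton is sound. Two factual slips should be corrected, though neither affects the structure of the argument. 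First, the minuscule $A_1$ giving $D_6+A_1$ is the highest-root $A_1$ (the affine node $0$ in the displayed Borel--de Siebenthal diagram, obtained by deleting node $1$), not the node labelled $2$; correspondingly the $5$-grading is the one induced by the fundamental coweight dual to $\alpha_1$. Second, the $32$-dimensional ternary algebra $\mathfrak g_1$ (the half-spin representation of $\mathfrak{so}_{12}$, with $\TKK$ of type $E_7$) is the one attached to the $15$-dimensional cubic Jordan algebra of $3\times3$ Hermitian quaternionic matrices, not to the $27$-dimensional Albert algebra --- the latter yields the $56$-dimensional system whose $\TKK$ is $E_8$, which is the object appearing later in Lemma~\ref{lem:subgift}. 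Finally, you correctly identify where the real work lies (that the descent datum is governed by an honest quaternion algebra and that $\pi,\phi$ descend on the nose, and the bookkeeping of centralizers up to isogeny when matching $H$ with $\Aut(W)$); these points are exactly what \cite[\S~3]{Pe} supplies, so leaving them as citations is consistent with the paper's own level of detail.
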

\begin{proof}
See \cite[\S~3]{Pe}.
\end{proof}

\begin{lemma}\label{lem:subgift}
Let $H_1$ and $H_2$ be the centralizers of two minuscule subgroups of type $A_1$ in $G$ with $H_1=\Aut(W)$ as in Lemma~\ref{lem:D6}. Then there is a quaternionic subgift $U$ of $W$ of (quaternionic) dimension at least $4$ corresponding to a simple balanced symplectic ternary algebra over an algebraic closure of $F$, such that
$[H_1\cap H_2,H_1\cap H_2]^\circ$ is an almost direct product of $\Aut(U)^\circ$ and another semisimple group.
\end{lemma}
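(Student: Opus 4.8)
The plan is to work geometrically with the minuscule embedding and track what happens to the module $W$ under intersection. First I would fix the minuscule $A_1$-subgroup $S_1$ with centralizer $H_1 = \Aut(W)$ as in Lemma~\ref{lem:D6}, so that $W$ is the quaternionic gift whose $\TKK$-construction recovers the Lie algebra of $G$ (the $7$-grading / $5$-grading picture: $\Lie(G) = Q \oplus W \oplus \Lie(H_1)$ in the twisted sense, where the $\Sl_2$ comes from $Q$). The second minuscule $A_1$, call it $S_2$, with centralizer $H_2$, lies inside $G$; its Lie algebra $\mathfrak{s}_2 \cong \Sl_2$ acts on $\Lie(G)$ and hence decomposes $W$ into isotypic components for $S_2$. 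The key point is that $H_1 \cap H_2$ centralizes both $S_1$ and $S_2$, so it preserves both gradings, and in particular it preserves the $S_2$-invariants inside $W$. I would set $U := W^{S_2}$, the subspace of $W$ fixed by (the image in $\Aut(W)$ of) $S_2 \cap H_1$, or more precisely the largest $Q$-submodule on which the relevant copy of $\Sl_2$ acts trivially; one checks using the reconstruction formula for the Lie triple system (the displayed formula with $\pi$ and $\phi$ in Section~\ref{sec:Quat}) that $U$ is closed under the ternary product and that $\phi$ restricts to a Hermitian form on $U$, i.e. $U$ is a quaternionic subgift, and $\Aut(U)^\circ$ is a quotient of $(H_1 \cap H_2)^\circ$ acting on $U$.

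Next I would identify the rest of $H_1 \cap H_2$. Over an algebraic closure the situation becomes the classical real/split EVI picture: two points of the symmetric space $E_7/D_6 \cdot A_1$ in some mutual position, and $H_1 \cap H_2$ is the stabilizer of the pair. The orthogonal complement $U^\perp$ of $U$ in $W$ (with respect to $\phi$) is again a quaternionic gift, $W = U \perp U^\perp$, and $H_1 \cap H_2$ acts on $U^\perp$ through a second semisimple group $K$; since $[H_1\cap H_2, H_1\cap H_2]^\circ$ is semisimple (it is reductive by Lemma~\ref{lem:pasha}, being a subalgebra of the anisotropic-or-not reductive $\Lie(H_1)$, and one removes the center by taking the derived group), it maps onto $\Aut(U)^\circ \times K$ with finite kernel, giving the claimed almost-direct-product decomposition. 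To see that $U$ corresponds to a \emph{simple} balanced symplectic ternary algebra over $\bar F$, I would invoke Proposition~\ref{prop:simple}: it suffices to show $\TKK(U_{\bar F})$ is reductive, which follows because $\TKK(U_{\bar F})$ is the derived subalgebra of the centralizer in $\Lie(G_{\bar F})$ of the $\Sl_2 \times \Sl_2$ generated by the two minuscule $A_1$'s together with the extra semisimple part — a centralizer of a reductive subalgebra in a semisimple Lie algebra is reductive.

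The dimension bound $\dim_Q U \geq 4$ is the numerically delicate step and I would handle it by a Dynkin-index / dimension count over $\bar F$. The possible mutual positions of the two $A_1$'s are governed by how $\mathfrak{s}_2$ sits relative to the $7$-grading induced by $\mathfrak{s}_1$; enumerating these (using that both are \emph{minuscule}, i.e. Dynkin index $1$, so the branching is highly constrained — each weight space is small) shows that the generic position gives the whole $W$ ($\dim_Q W = 8$), and the special positions cut down to subgifts of quaternionic dimension $8$, $7$ (the cubic-norm case), $6$, or $4$, but never smaller, because a proper $S_2$ that is minuscule cannot centralize more than a codimension-$4$ piece without forcing $S_2 \subseteq H_1$ (the incident case, which is excluded or trivial). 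Concretely one computes $\dim W^{S_2}$ from the $\mathfrak{s}_2$-module structure of $\Lie(G)$ and checks case by case against the list in Proposition~\ref{prop:class}. The main obstacle is precisely this classification of mutual positions and the verification that the smallest non-trivial one still has quaternionic dimension $4$ (corresponding to $\Aut(U) \cong \SL_2$-type, the symplectic case of Proposition~\ref{prop:class}(1)); I expect this to require either an explicit root-theoretic computation in $E_7$ or an appeal to Atsuyama's classification of positions in the real case combined with a Galois-descent argument to transport it to arbitrary characteristic-zero $F$.
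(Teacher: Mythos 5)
Your overall architecture (produce a subgift $U$, get simplicity from reductivity of $\TKK(U)$ plus Proposition~\ref{prop:simple}, then split $[H_1\cap H_2,H_1\cap H_2]^\circ$ as $\Aut(U)^\circ$ times a complement) matches the paper, but the two load-bearing steps are not carried out correctly. First, the construction of $U$: you set $U=W^{S_2}$, ``the subspace of $W$ fixed by the image in $\Aut(W)$ of $S_2\cap H_1$.'' This is not well defined in the way you need it: $S_2$ does not normalize $H_1$ and does not preserve the $5$-grading attached to $S_1$, so it does not act on $W$; and $S_2\cap H_1$ can be finite (generically it is), in which case its fixed subspace is far too large and has no reason to be closed under the ternary product. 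The paper's actual construction lives one level up: it uses the $5$-grading on $\mathfrak{e}_8$ whose degree-one piece is the $56$-dimensional irreducible representation of $E_7$, made into a balanced symplectic ternary algebra. Each centralizer $H_i$ of type $D_6$ stabilizes a $32$-dimensional subspace there which is \emph{by inspection a subalgebra}, and $U$ descends from the intersection of the two $32$-dimensional subalgebras --- so closure under the product is automatic (intersection of subalgebras), rather than something to be ``checked using the reconstruction formula.''

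Second, and decisively, the dimension bound $\dim_Q U\geq 4$, which you yourself flag as ``the numerically delicate step,'' is exactly the point where your proposal has no proof: you defer it to an enumeration of mutual positions of the two $A_1$'s (via root-theoretic computation or Atsuyama's real classification plus descent). That enumeration is essentially the content of the main theorem, so building it into this lemma would be circular, and in any case you do not carry it out. In the paper the bound is a one-line consequence of the setup above: two $32$-dimensional subspaces of a $56$-dimensional space intersect in dimension at least $32+32-56=8$, i.e.\ quaternionic dimension at least $4$. Finally, your route to reductivity of $\TKK(U_{\bar F})$ (``it is the derived subalgebra of the centralizer of $\Sl_2\times\Sl_2$\dots'') is false: $\TKK(U)$ contains $U$ in degrees $\pm1$, on which the relevant $\Sl_2$ acts nontrivially, so it is not contained in any such centralizer. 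The correct argument is to apply Lemma~\ref{lem:pasha} over $F$ (where $\Lie G$ is anisotropic reductive) to the subalgebra $\TKK(U)$ itself, and then invoke Proposition~\ref{prop:simple}.
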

\begin{proof}
Let's pass first to the algebraic closure of $F$. There is a $5$-grading on the split Lie algebra of type $E_8$ that defines a balanced symplectic ternary algebra structure on the $56$-dimensional irreducible representation of $E_7$. A subgroup of type $D_6$ inside $E_7$ stabilizes a $32$-dimensional subspace which is by inspection a subalgebra of this $56$-dimensional ternary algebra. The intersection of two such subalgebras is also a subalgebra of dimension at least $32+32-56=8$.

Since $H_1\cap H_2$ commutes with both $A_1$'s, this intersection descends to a quaternionic subspace $U$ of $W$ of quaternionic dimension at least $4$, which is a quaternionic subgift since the intersection is a subalgebra. Now $\TKK(U)$ is a Lie subalgebra of $\Lie G$, which is reductive by Lemma~\ref{lem:pasha}, and $U$ corresponds to a simple balanced symplectic ternary algebra over an algebraic closure of $F$ by Proposition~\ref{prop:simple}.

There is a map
$$
[H_1\cap H_2,H_1\cap H_2]^\circ\to\Aut(U),
$$
which is surjective at the level of Lie algebras, since $\Der(U)=\Lie \Aut(U)$ is generated by the inner derivations. The Lie algebra of the kernel is semisimple and is an ideal of the Lie algebra $[H_1\cap H_2,H_1\cap H_2]^\circ$, so it is a direct summand with the complement isomorphic to $\Der(U)$, and the claim follows.
\end{proof}

\begin{lemma}\label{lem:comm}
Let $H_1$ and $H_2$ be the centralizers of two commuting minuscule subgroups of type $A_1$ in $G$. Then $H_1\cap H_2$ is of type $D_4+A_1$.
\end{lemma}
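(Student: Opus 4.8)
The plan is to pass to an algebraic closure of $F$ and then compute the centralizer of the second $A_1$ inside the $D_6$-subgroup $H_1$, controlling everything via Dynkin indices. Since the type of a linear algebraic group is unchanged by extension of scalars, I may assume $F=\overline F$, so that $G$ is split of type $E_7$; write $S_1,S_2$ for the two commuting minuscule $A_1$-subgroups and $H_i=C_G(S_i)$. Because $S_1$ and $S_2$ commute, $H_1\cap H_2=C_{H_1}(S_2)$, and since $S_2$ is connected it lies in $H_1^\circ$, which is of type $D_6$. By Proposition~\ref{prop:minusc}(2) I may assume in addition that $S_1$ is the $A_1$-component of a Borel--de Siebenthal subsystem $A_1+D_6\subset E_7$, with $H_1^\circ$ the complementary $D_6$. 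Then both $S_1\hookrightarrow E_7$ and $H_1^\circ\hookrightarrow E_7$ have Dynkin index $1$, since a subsystem subgroup of a simply-laced group has Dynkin index $1$ (this also confirms that the subsystem $A_1$ is minuscule).

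Next I would determine the embedding $S_2\hookrightarrow H_1^\circ$. By multiplicativity of Dynkin multi-indices under composition, $S_2\hookrightarrow H_1^\circ$ has Dynkin index $1$. To avoid committing to an isogeny type of $H_1^\circ$ I argue with Lie algebras: $\mathfrak{sl}_2$ is embedded with Dynkin index $1$ into $\Lie H_1^\circ\cong\mathfrak{so}_{12}$, so its action on the standard $12$-dimensional representation has Dynkin index $2$ (the vector representation of $\mathfrak{so}_{12}$ has Dynkin index $2$ by the Casimir formula). Since the irreducible $(n+1)$-dimensional representation of $\mathfrak{sl}_2$ has Dynkin index $\binom{n+2}{3}$, this restriction is forced to be the sum of two copies of the standard and eight copies of the trivial representation. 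Hence $\mathfrak{sl}_2$ preserves an orthogonal decomposition $F^{12}=F^4\perp F^8$, acting trivially on $F^8$ and through the first tensor factor of $F^4\cong F^2\otimes F^2$, the invariant form on $F^4$ being the product of the symplectic forms on the two tensor factors. A Schur-type computation then identifies $C_{\mathfrak{so}_{12}}(\mathfrak{sl}_2)$ with $\mathfrak{sl}_2\oplus\mathfrak{so}_8$, which is of type $D_4+A_1$. In characteristic $0$ this is the Lie algebra of $(H_1\cap H_2)^\circ=C_{H_1}(S_2)^\circ$, so $H_1\cap H_2$ is of type $D_4+A_1$.

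I do not expect any real obstacle: the argument is a bookkeeping of Dynkin indices along $S_2\subset H_1^\circ\subset E_7$ together with one elementary representation-theoretic computation inside $\mathfrak{so}_{12}$, and the only point to be careful about is finishing with Lie algebras rather than groups so as not to fix an isogeny type of $H_1^\circ$. As a consistency check one can instead invoke Lemma~\ref{lem:subgift}: in the commuting case the quaternionic subgift $U$ has dimension $16$ over $\overline F$ and is the simple balanced symplectic ternary algebra attached to a quadratic form (item~(2) of Proposition~\ref{prop:class}); its automorphism group already has type $D_4+A_1$, so the extra semisimple factor in Lemma~\ref{lem:subgift} is trivial in this case.
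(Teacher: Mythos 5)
Your proof is correct, and its first step is exactly the paper's: since $S_1$ and $S_2$ commute, $H_1\cap H_2=C_{H_1}(S_2)$ is the centralizer in a group of type $D_6$ of a minuscule $A_1$-subgroup. Where you diverge is in how that centralizer is identified. The paper reads the answer off the extended Dynkin diagram of $D_6$: the minuscule $A_1$ is a long-root subgroup, and the roots orthogonal to the highest root of $D_6$ form a subsystem of type $D_4+A_1$. You instead verify it by bookkeeping of Dynkin indices --- the index of $S_2\hookrightarrow H_1^\circ$ is $1$ by multiplicativity, the vector representation of $\mathfrak{so}_{12}$ has index $2$, and the formula $\binom{n+2}{3}$ forces the restriction to be $2\cdot\mathrm{std}\oplus 8\cdot\mathrm{triv}$ --- followed by a Schur-type computation of the commutant in $\mathfrak{so}_{12}$. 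Both routes are sound; yours is longer but self-contained and, as a bonus, reproves (rather than assumes) that all index-$1$ copies of $\mathfrak{sl}_2$ in $\mathfrak{so}_{12}$ look alike on the vector representation, which is the same style of argument the paper itself deploys later in Lemma~\ref{lem:8in32}. Your closing consistency check against Lemma~\ref{lem:subgift} (the $16$-dimensional subgift of quadratic-form type, with $\Der(U)$ of dimension $31=28+3$) is also accurate, though of course not needed for the proof.
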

\begin{proof}
Indeed, $H_1\cap H_2$ is the centralizer in $H_1$ of a minuscule subgroup of type $A_1$, which has type $D_4+A_1$, as the extended Dynkin diagram shows.
\end{proof}

\begin{lemma}\label{lem:8SSTA}
Every simple balanced symplectic ternary algebra of dimension at least $8$ over an algebraically closed field contains a simple subalgebra of dimension $8$.
\end{lemma}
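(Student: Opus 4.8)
The plan is to go through the three families of Proposition~\ref{prop:class} and in each case exhibit the required $8$-dimensional simple subalgebra.

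First, if $A$ carries the product \eqref{eq:triv} attached to a nondegenerate symplectic form, the product involves only $\langle\cdot,\cdot\rangle$, so every nondegenerate symplectic subspace is a subalgebra; since $\dim A$ is even and at least $8$ I would take an $8$-dimensional one, which is again of this type and is simple because its $\TKK$ is a reductive symplectic Lie algebra, so Proposition~\ref{prop:simple} applies. Second, if $A$ is the algebra of $2\times2$ matrices over the Jordan algebra $J(q)$ of a nondegenerate quadratic form $q$ on a space $V$, then $\dim A=2\dim V+4\geq 8$ forces $\dim V\geq 2$; choosing a nondegenerate plane $V'\subseteq V$ gives a unital Jordan subalgebra $J(q|_{V'})\subseteq J(q)$, hence an $8$-dimensional subalgebra of $A$, again of the second type and so again simple.

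The interesting case is that of an admissible cubic form, where $\TKK(A)$ is one of $\mathfrak{g}_2,\mathfrak{f}_4,\mathfrak{e}_6,\mathfrak{e}_7,\mathfrak{e}_8$ with $\dim A$ correspondingly $4,14,20,32,56$, so $\dim A\geq 8$ leaves $\TKK(A)\in\{\mathfrak{f}_4,\mathfrak{e}_6,\mathfrak{e}_7,\mathfrak{e}_8\}$; as this family has no $8$-dimensional member, the subalgebra sought must be the one with $\TKK\cong\mathfrak{so}_8$. I would use the chain $\mathfrak{so}_8\subseteq\mathfrak{f}_4\subseteq\mathfrak{e}_6\subseteq\mathfrak{e}_7\subseteq\mathfrak{e}_8$, with $\mathfrak{so}_8$ the subalgebra spanned by the long roots of $\mathfrak{f}_4$, and check that the extra-special grading of $\TKK(A)$ restricts to the extra-special grading of this $\mathfrak{so}_8$. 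The key point: the highest root $\theta$ of $D_4$ is also the highest root of $\mathfrak{f}_4$ and stays a root of each of $\mathfrak{e}_6,\mathfrak{e}_7,\mathfrak{e}_8$, and its coroot $\theta^\vee$ (the coweight inducing the extra-special grading of $D_4$) pairs with every root of $\TKK(A)$ in $\{-2,-1,0,1,2\}$, the value $2$ occurring only for $\theta$; so $\theta^\vee$ induces the extra-special grading of $\TKK(A)$ and simultaneously that of $\mathfrak{so}_8$. Then the element $f$ spanning $L_{-2}$ lies in $\mathfrak{so}_8$, so $L_1(\mathfrak{so}_8)\subseteq L_1(\TKK(A))=A$ is closed under $xyz=[[[f,x],y],z]$ and is a subalgebra; its $\TKK$ is the reductive algebra $\mathfrak{so}_8$, so it is simple of dimension $8$ by Proposition~\ref{prop:simple}.

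The step I expect to be the real obstacle is this last compatibility — that transporting the $D_4$-cocharacter $\theta^\vee$ into the exceptional Lie algebra keeps all weights within $\{-2,\dots,2\}$. This is a finite check: the three $8$-dimensional representations of $\Spin_8$ inside $\mathfrak{f}_4$, and the extra summands appearing on the way up to $\mathfrak{e}_6,\mathfrak{e}_7,\mathfrak{e}_8$, must carry only $\theta^\vee$-weights $-1,0,1$. I expect the cleanest route is to read it off the decomposition $\mathfrak{e}_8=\mathfrak{so}_8\oplus\mathfrak{so}_8\oplus(\mathbf{8}_v\boxtimes\mathbf{8}_v)\oplus(\mathbf{8}_s\boxtimes\mathbf{8}_s)\oplus(\mathbf{8}_c\boxtimes\mathbf{8}_c)$, in which the weights of $\mathfrak{e}_8$ with respect to the first $\mathfrak{so}_8$ are exactly the roots of $\mathfrak{so}_8$ together with the (plainly $\{-1,0,1\}$-valued) weights of $\mathbf{8}_v,\mathbf{8}_s,\mathbf{8}_c$.
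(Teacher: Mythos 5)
Your proposal is correct, and for the first two families of Proposition~\ref{prop:class} it agrees with the paper's proof (which handles the quadratic-form case by exactly your observation that a nondegenerate form of dimension at least $3$ contains a nondegenerate $3$-dimensional subform, and leaves the symplectic case implicit). For the admissible-cubic case, however, the paper takes a different and much shorter route, staying entirely on the Jordan side: a quadratic Jordan algebra attached to an admissible cubic form contains a $3$-dimensional \'etale subalgebra, and the $2\times 2$ matrix construction over that subalgebra is the desired $8$-dimensional simple subalgebra (with $\TKK$ of type $D_4$). Your route works instead on the Lie-algebra side, pushing the extra-special grading along the chain $\mathfrak{so}_8\subset\mathfrak{f}_4\subset\mathfrak{e}_6\subset\mathfrak{e}_7\subset\mathfrak{e}_8$, with $\mathfrak{so}_8$ the long-root subalgebra of $\mathfrak{f}_4$. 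This is sound: for a long root $\theta$ one has $\langle\beta,\theta^\vee\rangle\in\{-2,\dots,2\}$ with $\pm2$ attained only at $\pm\theta$ (Cauchy--Schwarz), so the $\theta^\vee$-grading is extra-special on every member of the chain; since all long-root $\mathfrak{sl}_2$-triples are conjugate, the given grading of $\TKK(A)$ may be assumed to be this one, whence $e,f\in\mathfrak{so}_8$ and $L_1(\mathfrak{so}_8)$ is closed under $[[[f,x],y],z]$. What your version buys is that it exhibits the $8$-dimensional subalgebra together with its $\TKK$ of type $D_4$ sitting inside $\TKK(A)$, which is precisely the configuration exploited afterwards in Lemma~\ref{lem:8in32}; what it costs is the extra input of the subalgebra chain and the weight check, where the paper needs one sentence of Jordan theory. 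The only point worth tightening is the final appeal to Proposition~\ref{prop:simple}: you should note that the abstract $\TKK$ of $L_1(\mathfrak{so}_8)$ really is $\mathfrak{so}_8$ (for instance because $L_1\oplus L_{-1}$ generates $\mathfrak{so}_8$ as a Lie algebra and $\mathfrak{so}_8$ acts faithfully on it), so that reductivity of $\mathfrak{so}_8$ indeed gives simplicity of the subalgebra.
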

\begin{proof}
Follows from Proposition~\ref{prop:class}. Indeed, a nondegenerate quadratic form of dimension at least $3$ contains a $3$-dimensional nondegenerate subform, and a quadratic Jordan algebra associated with an admissible cubic form contains a $3$-dimensional \'etale subalgebra.
\end{proof}

\begin{lemma}\label{lem:8in32}
Let $V$ be a $32$-dimensional balanced symplectic ternary algebra over an algebraically closed field with $\TKK(V)$ of type $E_7$. Then all $\TKK$ of $8$-dimensional simple subalgebras in $V$ are conjugate in $\TKK(V)$.
\end{lemma}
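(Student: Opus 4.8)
The plan is to reduce the statement to a conjugacy statement about the embedding Lie algebras inside $\g = \TKK(V) \oplus \Sl_2$ of type $E_8$, and then invoke the fact that an $8$-dimensional simple balanced symplectic ternary algebra (by Proposition~\ref{prop:class}, the one built from a $3$-dimensional nondegenerate quadratic form, so $\TKK$ of type $\So_8 \simeq D_4$) gives a well-understood subgroup. First I would recall from Section~\ref{sec:Quat}/the TKK construction that an $8$-dimensional simple subalgebra $A \subset V$ yields a $5$-grading on $\TKK(A)$ whose degree-$\pm 2$ parts are one-dimensional and sit inside the degree-$\pm 2$ parts of $\TKK(V)$; adding the ambient $\Sl_2 = Fe \oplus F[e,f] \oplus Ff$, we get $\TKK(A) + \Sl_2$, a reductive subalgebra of type $D_4 + A_1$ (equivalently $D_4 \times A_1$ up to isogeny) inside $E_8$. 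So the claim becomes: all such $D_4 + A_1$ subalgebras of $E_8$ arising this way — i.e. compatible with the fixed $5$-grading and containing the fixed $\Sl_2$ — are conjugate by an element of $\TKK(V)$ (the degree-$0$ plus the grading-compatible part).

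Next I would pin down the conjugacy in two stages. Since all the subalgebras in question contain the \emph{same} $\Sl_2$ (the one generated by the fixed $e,f$ that defines the grading on $V$), conjugating them amounts to conjugating inside the centralizer of that $\Sl_2$, which is exactly $\TKK(V)$ acting on $V = L_1$. Thus it suffices to show that all $8$-dimensional simple subalgebras of the ternary algebra $V$ are conjugate under $\Aut(V)^\circ$, or at least under the image of $\TKK(V)$ in $\End(V)$. Here I would use that over an algebraically closed field $\TKK(V)$ of type $E_7$ acts on the $56$-dimensional $V$ with an open orbit structure that is classical: an $8$-dimensional simple subalgebra corresponds (via Proposition~\ref{prop:class}) to a choice of a nondegenerate $3$-dimensional quadratic subspace inside the relevant cubic-norm structure, equivalently to a rank-$3$ piece of the underlying Jordan algebra $J$ (the $27$-dimensional exceptional Jordan algebra), namely a diagonal $F \times F \times F$ coming from a frame. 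The automorphism group $\Aut(V) = E_7$ acts transitively on such configurations because the reduced structure group of $J$, which is contained in $E_7$, acts transitively on the set of $3$-dimensional étale (split) subalgebras / frames of $J$ — this is a standard fact about Albert algebras.

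The main obstacle, and the step I would spend the most care on, is the transitivity claim in the last paragraph: identifying precisely which conjugacy-geometric object an $8$-dimensional simple subalgebra of $V$ "is" inside the $56$-dimensional Freudenthal module, and showing $E_7$ is transitive on them. Concretely, I would argue that such a subalgebra $A$, being of the quadratic-form type, forces its $\TKK$ to be $\So_8$, and the pair $(\TKK(A), V)$ decomposes $V$ as (the $8$-dimensional $A$) $\oplus$ (a complement), where $\So_8$ acts on $A$ as vector $\oplus$ (the two spinors in the Freudenthal $3$-dimensional–quadratic model) — matching the description in \cite{FF}. Two such subalgebras then differ by an element normalizing the grading and the $\Sl_2$, i.e. by $\TKK(V)$, because any isomorphism of balanced symplectic ternary algebras $A \xrightarrow{\sim} A'$ extends to an automorphism of $V$: this extension statement is the crux, and I expect to deduce it either from a Witt-type extension theorem for the cubic norm structure (extend an isometry of a nondegenerate subform to the whole Albert algebra via transitivity of $E_6$/its structure group on frames) or directly from \cite[\S 4]{FF} together with the observation that $\TKK(V) = E_7$ is generated by the grading-compatible automorphisms. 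Once the extension statement is in hand, conjugating $A$ to $A'$ inside $V$ simultaneously conjugates $\TKK(A) + \Sl_2$ to $\TKK(A') + \Sl_2$ inside $\TKK(V) \oplus \Sl_2$, which is the desired conclusion.
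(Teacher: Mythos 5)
Your overall strategy (reduce conjugacy of the $\TKK(U)$'s to transitivity of a group acting on the $8$-dimensional subalgebras of $V$) is legitimate in principle, but as written the argument rests on a misidentification of the objects involved. In this lemma $V$ is the \emph{32-dimensional} algebra, i.e.\ $L_1$ of the $5$-graded $E_7$ with $L_0\supset\So_{12}$; it is the ternary algebra built from $H_3(Q)$ with $Q$ a split quaternion algebra, \emph{not} the $56$-dimensional one built from the Albert algebra (that one has $\TKK$ of type $E_8$). Consequently $\Aut(V)$ is of type $D_6$, not $E_7$, and the transitivity you invoke (``$E_7$ acts transitively on frames of the $27$-dimensional exceptional Jordan algebra'') concerns the wrong module. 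There are related slips upstream: $\TKK(V)\oplus\Sl_2$ is of type $E_7+A_1$, a proper subalgebra of $E_8$; moreover the grading $\Sl_2=Fe\oplus F[e,f]\oplus Ff$ already lies inside $\TKK(V)$ and inside each $\TKK(U)$ (the degree-$\pm2$ parts of $\TKK(U)$ coincide with those of $\TKK(V)$, since $[x,y]=\langle x,y\rangle e$ for $x,y\in U\subset L_1$), so $\TKK(U)+\Sl_2=\TKK(U)$ is of type $D_4$, not $D_4+A_1$.

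Even after correcting the model, the step you yourself flag as the crux --- that every isomorphism of $8$-dimensional simple subalgebras extends to an automorphism of $V$, equivalently that the grading-preserving subgroup (of type $D_6\cdot\mathbb{G}_m$) acts transitively on such subalgebras --- is essentially the content of the lemma and is not proved; you would also first need that every $8$-dimensional simple subalgebra is of the quadratic-form type and sits in ``standard position'' relative to a frame of $H_3(Q)$, which is itself a conjugacy statement. The paper avoids all of this by working with the subalgebras $\TKK(U)\simeq D_4$ of $E_7$ directly: it shows the Dynkin index of this embedding $D_4\to E_7$ is $1$ (ruling out the index-$2$ embedding through $A_7$ by restricting the $56$-dimensional representation of $E_7$ both to $3A_1\subset D_4$ and to $D_6$ and comparing, the contradiction coming from the center of $\SL_2\times\SL_2\times\SL_2\to\Spin_{12}$), and then uses that index-$1$ subalgebras of type $D_4$ in $E_7$ form a single conjugacy class, together with conjugacy of the minuscule $3A_1\subset D_4$. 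To keep your approach you would have to supply the transitivity/extension theorem for the $32$-dimensional system over $H_3(\MM_2(F))$; otherwise the Dynkin-index route is the shorter path.
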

\begin{proof}
By Proposition~\ref{prop:class} $\TKK$ of an $8$-dimensional simple balanced symplectic ternary algebra $U$ has type $D_4$, and $\Inder(U)$ is of type $3A_1$.

Let us show that the Dynkin index of the embedding $D_4\to E_7$ is $1$. According to the table in \cite{Mi} the only Lie subalgebra of type $D_4$ in $E_7$ of Dynkin index more than $1$ (namely, $2$) sits inside $A_7$, and in this case the $56$-dimensional irreducible representation of $E_7$ decomposes into the sum of two copies of the adjoint representation of $D_4$. Restricting to $3A_1$ we get
$$
2\ad_1+2\ad_2+2\ad_3+4\vect_1\otimes\vect_2\otimes\vect_3+6\triv,
$$
where $\ad_i$ stands for the adjoint representation, $\vect_i$ for the vector representation, and $\triv$ for the trivial representation of the respective copy of $A_1$.

On the other hand, let us restrict this representation to $D_6$: it decomposes as the sum of two copies of the vector representation and the half-spinor representation. However, the Dynkin index of the vector representation is $2$, and the only possibility is that it decomposes as
$$
\ad_1+\ad_2+\ad_3+3\triv.
$$

Let us look at the map
$$
\SL_2\times\SL_2\times\SL_2\to\Spin_{12}.
$$
On the one hand, the image of the center becomes trivial after passing to $\SO_{12}$, and so it is contained in $\mu_2$. On the other hand, it has a representation $\vect_1\otimes\vect_2\otimes\vect_3$, so it must be at least $\mu_2\times\mu_2$, a contradiction.

Now all Lie subalgebra of type $D_4$ in $E_7$ of Dynkin index $1$ are conjugate, and $3A_1$ inside $D_4$ being the centralizer of a minuscule $A_1$ are also conjugate, and the claim follows.
\end{proof}

\begin{theorem}\label{thm:main}
Let $A$ and $B$ be two points in $\mathcal{S}(G)$ with anisotropic $G$ of type $E_7$. Then the set of lines passing through both $A$ and $B$ can be identified with the set of the rational points of a symmetric space, which has one of the following types:
\begin{enumerate}
    \item $D_6/D_4+2A_1$ (when $A=B$);
    \item $D_4/4A_1\coprod\pt$ (when $A$ and $B$ commute);
    \item $B_3/3A_1\coprod\pt$;
    \item $A_3/A_2\cdot{\mathbb G}_m\coprod\pt$;
    \item $B_2/2A_1\coprod\pt$;
    \item $\pt\coprod\pt\coprod\pt$ (in the general position);
    \item $A_5/A_3+A_1$;
    \item $C_3/C_2+A_1$.
\end{enumerate}
\end{theorem}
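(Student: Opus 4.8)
The plan is to reduce the classification of the "space of common lines" to the classification of simple balanced symplectic ternary algebras over $\bar F$ via the machinery of Lemma~\ref{lem:subgift}, and then do a case-by-case Galois-descent analysis. A line $C$ passes through both $A$ and $B$ exactly when the $A_1$-subgroup $C$ commutes with both $A$ and $B$, i.e. $C$ is a minuscule $A_1$ inside $H_1\cap H_2$, where $H_i$ is the centralizer of type $D_6$ of $A$ resp. $B$ (Lemma~\ref{lem:D6}). By Lemma~\ref{lem:subgift}, $[H_1\cap H_2,H_1\cap H_2]^\circ$ is an almost direct product of $\Aut(U)^\circ$ and a complementary semisimple factor, where $U$ is a quaternionic subgift of $W$ whose underlying ternary algebra over $\bar F$ is simple of dimension at least $8$. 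So first I would show that the minuscule $A_1$'s in $H_1\cap H_2$ are precisely those landing, up to conjugacy, in the $\Aut(U)^\circ$-factor: since the $56$-dimensional representation of $E_7$ restricted to the complementary factor carries no minuscule $A_1$ (it is built from the trivial and vector-type pieces as in the computation in Lemma~\ref{lem:8in32}), a minuscule $A_1$ commuting with both $A$ and $B$ is a minuscule $A_1$ in $\Aut(U)^\circ$, i.e. a point of the symmetric space $\Aut(U)^\circ/\!\Stab$ plus possibly isolated points coming from $A_1$'s that are themselves contained in the $A_1$-factor of $H_1\cap H_2$. Thus the space of common lines is $\mathcal S(\Aut(U)^\circ)$ together with the extra isolated points, and I would enumerate these by running through the classification of simple balanced symplectic ternary algebras in Proposition~\ref{prop:class}.

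Next I would organize the cases by $\dim_{\bar F} U$ and the type of $\TKK(U)$. When $A=B$ there is no intersection constraint and $U$ is all of $W$: $\TKK(W)$ is of type $E_7$, $\Aut(W)$ is of type $D_6$, and the minuscule $A_1$'s inside it form $D_6/(D_4+2A_1)$, giving case~(1) (here the $2A_1$ is the centralizer of a minuscule $A_1$ in $D_6$, read off the extended Dynkin diagram). When $A$ and $B$ commute, Lemma~\ref{lem:comm} gives $H_1\cap H_2$ of type $D_4+A_1$; the minuscule $A_1$'s in the $D_4$-factor form $D_4/4A_1$, and the $A_1$-factor itself is one further point — case~(2). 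For the remaining intermediate positions, $U$ ranges over the simple algebras of dimension strictly between $8$ and $32$: by Proposition~\ref{prop:class} the admissible ones are the Jordan-algebra-of-a-quadratic-form type (giving $\TKK(U)$ of type $\So_n$, hence $\Aut(U)^\circ$ of type $B_3$, $D_4$, $A_3\simeq D_3$, $B_2\simeq C_2$ as $n$ decreases) and the cubic-form type (giving $\mathfrak g_2$, $\mathfrak f_4$, $\mathfrak e_6$). I would match these against the dimension bound and the structure of the complementary factor to produce cases~(3)–(6): in each the "large" factor contributes a symmetric space ($B_3/3A_1$, $A_3/(A_2\cdot\mathbb G_m)$, $B_2/2A_1$) and the complementary $A_1$-type factor contributes the extra isolated point, while the generic position is when $U$ is $8$-dimensional and rigid, so by Lemma~\ref{lem:8in32} the few minuscule $A_1$'s give three isolated points — case~(6). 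Cases~(7) and~(8), $A_5/(A_3+A_1)$ and $C_3/(C_2+A_1)$, correspond to the situations where $U$ (or rather the relevant subgroup of $H_1\cap H_2$) descends over $F$ to a nonsplit form controlled by a quaternion algebra $Q$, in which $\Aut(U)^\circ$ becomes an outer/quaternionic form — here I would use the quaternionic gift description of Section~\ref{sec:Quat} to identify the $F$-form of the symmetric space, noting that $\SU$ of a Hermitian form over $Q$ gives $C_3$-type (rank-$3$ symplectic) and $A_5$-type pieces.

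The computation that controls which $A_1$'s are minuscule — ruling out Dynkin index $>1$ embeddings and pinning down the centralizers — is exactly the content of Lemmas~\ref{lem:8in32} and~\ref{lem:comm}, so those are available; the main obstacle I anticipate is the \emph{descent bookkeeping}: the list is stated over $F$, so for each $\bar F$-type I must determine which twisted $F$-forms actually occur for anisotropic $G$ of type $E_7$, and check that the "extra point" summand is genuinely an $F$-point and not merely an $\bar F$-point — this requires tracking the action of $\Gal(\bar F/F)$ on $H_1\cap H_2$ through the quaternionic gift $W$, in particular showing that the quaternion algebra $Q$ attached to $W$ (which is nontrivial precisely because $G$ is anisotropic) is what produces the nonsplit symmetric spaces in cases~(7) and~(8) while the split cases~(1)–(6) occur when the relevant subgift splits $Q$. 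A secondary subtlety is verifying that the decomposition of $[H_1\cap H_2,H_1\cap H_2]^\circ$ in Lemma~\ref{lem:subgift} is compatible with descent, i.e. that the two factors are each defined over $F$ (or possibly swapped by Galois, which would itself produce a case like~(7)); I would handle this by observing that the $\Aut(U)^\circ$-factor is distinguished by its action on $W$, hence Galois-stable, forcing the complement to be Galois-stable as well.
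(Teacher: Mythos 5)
Your overall skeleton --- identify the common lines with minuscule $A_1$'s in $H_1\cap H_2$, reduce to $\mathcal S(\Aut(U)^\circ)$ for the quaternionic subgift $U$ of Lemma~\ref{lem:subgift}, and run through Proposition~\ref{prop:class} --- is the same as the paper's. But two of your key steps are wrong. First, your reason for discarding the complementary semisimple factor is not a proof: you say the $56$-dimensional representation restricted to that factor ``carries no minuscule $A_1$'' because it is built from trivial and vector-type pieces, but a minuscule $A_1$ in $E_7$ is \emph{precisely} one on which the $56$-dimensional representation restricts to $12$ vector plus $32$ trivial summands, so this criterion excludes nothing. The paper's actual argument is different: by Lemma~\ref{lem:8in32} the algebra $\Der(U)$ contains a $3A_1$ of Dynkin index $1$; the complementary factor $H$ commutes with it, hence lands in the centralizer of this $3A_1$ in $E_7$, which has type $D_4$ and also contains $A$ and $B$; a minuscule $A_1$-subgroup $C$ of $H$ would be centralized by $A$ and $B$, and since the centralizer of $C$ in that $D_4$ is $3A_1$, this forces $A$ and $B$ to coincide or commute --- cases already settled. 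Without this (or an equivalent) argument you cannot conclude that the set of common lines is exactly $\mathcal S(\Aut(U))$. Relatedly, the extra summand $\coprod\pt$ in cases (2)--(5) does not come from the complementary factor or ``the $A_1$-factor of $H_1\cap H_2$'': it already sits inside $\mathcal S(\Aut(U))$, because for $U$ of quadratic type $\Der(U)$ has an $A_1$ direct factor of its own (its type is $A_1+B_3$, $A_1+A_3$, $A_1+B_2$, $3A_1$ in cases (3)--(6) respectively).

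Second, your explanation of cases (7) and (8) is incorrect. They are not Galois twists of the quadratic-type cases: $A_5$ and $C_3$ are different absolute types from $B_3$, $D_3$, $B_2$, and twisting by a quaternion algebra cannot change the absolute type appearing in the list. These two cases arise, already over the algebraic closure, from the \emph{third} case of Proposition~\ref{prop:class}: when $U$ is the ternary algebra of a cubic Jordan algebra of dimension $6$ or $9$, one has $\TKK(U)$ of type $F_4$ or $E_6$, $\Aut(U)^\circ$ of type $C_3$ or $A_5$, and hence $\mathcal S(\Aut(U))=C_3/C_2+A_1$ or $A_5/A_3+A_1$ (the $\mathfrak g_2$ case is excluded by $\dim U\ge 8$, and the symplectic case of Proposition~\ref{prop:class} is excluded because $D_6$ contains no subalgebra of type $C_n$ with $n\ge 4$). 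The heavy descent bookkeeping you anticipate is not actually needed for the statement as formulated, since $\Aut(U)$ is defined over $F$ and the listed types are absolute.
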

\begin{proof}
Denote by $H_1$ and $H_2$ the centralizers of $A$ and $B$. Then the set of lines passing through both $A$ and $B$ is the set of  subgroups of type $A_1$ in $H_1\cap H_2$ that are of Dynkin index $1$ in $G$; in particular it is contained in $\mathcal{S}(H_1\cap H_2)
=\mathcal{S}([H_1\cap H_2,H_1\cap H_2]^\circ)$.

If $A=B$, the intersection is of type $D_6$, and we are in Case~1.

If $A$ and $B$ commute, by Lemma~\ref{lem:comm} this intersection is of type $D_4+A_1$, and since the embedding $D_4+A_1\to D_6$ is of Dynkin multi-index $(1,1)$, we are in Case~2.

By Lemma~\ref{lem:subgift} $[H_1\cap H_2,H_1\cap H_2]^\circ$ is an almost direct product of $\Aut(U)^\circ$ for some quaternionic gift $U$ and a semisimple group, say, $H$. Let us pass to the algebraic closure of $F$. By Lemma~\ref{lem:8in32} the Lie algebra $\Der(U)$ of $\Aut(U)$ contains a Lie subalgebra of type $3A_1$ with the components of Dynkin index $1$ in $D_6$ (hence, in $G$). In particular, the Dynkin index of the embedding $\Der(U)\to \Lie G$ is $1$.

Further, the Lie algebra of $H$ commutes with this Lie subalgebra of type $3A_1$, and so $H$ is contained in the centralizer of $3A_1$ in $E_7$, which is of type $D_4$. On the other hand, $A$ and $B$ centralizes $H_1\cap H_2$ and hence are also contained in this subgroup of type $D_4$.

Assume that there is a minuscule $A_1$-subgroup $C$ in $G$ sitting inside $H$. Then $A$ and $B$ centralizes $C$, but the centralizer of $C$ in $D_4$ is $3A_1$, so $A$ and $B$ either coincide or commute. These cases were settled above.

Otherwise the set of lines passing through $A$ and $B$ can be identified with $\mathcal{S}(\Aut(U))$, and the 2-nd case of Proposition~\ref{prop:class}, while the 3-rd case corresponds to the Cases~7--8. The 1-st case is impossible, since the dimension of $U$ over $F$ is at least $8$, and $D_6$ doesn't contain a Lie subalgebra of type $C_n$ with $n\ge 4$.
\end{proof}

\subsection*{Acknowledgements}

We are grateful to Nikolai Vavilov and Anastasia Stavrova for the attention to our work.

\subsection*{Funding}
Theorem~1 was obtained under the support by Russian Science Foundation grant 20-41-04401. The second author was supported by Young Russian Mathematics award and by ``Native Towns'', a social investment program of PJSC ``Gazprom Neft''. Also the second author is supported in part by The Euler International Mathematical Institute, grant number 075-15-2022-289.

\newpage
\thispagestyle{empty}
Victor Alexandrovich Petrov: St. Petersburg State University, 199178 Saint Petersburg, Russia, Line 14th (Vasilyevsky Island), 29 and PDMI RAS, 191023 Saint Petersburg, Russia, nab. Fontanki, 27

\smallskip

\texttt{victorapetrov@googlemail.com}

\medskip

Andrei Vyacheslavovich Semenov: St. Petersburg State University, 199178 Saint Petersburg, Russia, Line 14th (Vasilyevsky Island), 29

\smallskip

\texttt{asemenov.spb.56@gmail.com}

\end{document}